\newtheorem{thm}{Theorem}[section]
\newtheorem{lem}[thm]{Lemma}
\newtheorem{prop}[thm]{Proposition}
\newtheorem{cor}[thm]{Corollary}
\newtheorem{conj}[thm]{Conjecture}
\theoremstyle{definition}
\newtheorem{defin}[thm]{Definition}
\theoremstyle{remark}
\newtheorem{remark}[thm]{Remark}
\newtheorem{example}[thm]{Example}
\newcommand{\bth}{\begin{thm}}
\renewcommand{\eth}{\end{thm}}
\newcommand{\bpr}{\begin{prop}}
\newcommand{\epr}{\end{prop}}
\newcommand{\ble}{\begin{lem}}
\newcommand{\ele}{\end{lem}}
\newcommand{\bco}{\begin{cor}}
\newcommand{\eco}{\end{cor}}
\newcommand{\bde}{\begin{defin}}
\newcommand{\ede}{\end{defin}}
\newcommand{\bex}{\begin{example}}
\newcommand{\eex}{\end{example}}
\newcommand{\bre}{\begin{remark}}
\newcommand{\ere}{\end{remark}}
\newcommand{\bcj}{\begin{conj}}
\newcommand{\ecj}{\end{conj}}
\newcommand{\beq}{\begin{equation}}
\newcommand{\eeq}{\end{equation}}
\newcommand{\ve}{{\varepsilon}}
\newcommand{\ot}{{\otimes}}
\newcommand{\be}{\begin{equation}}
\newcommand{\ee}{\end{equation}}
\newcommand{\lb}{\label}
\newcommand{\bpf}{\begin{proof}}
\newcommand{\epf}{\end{proof}}
\newcommand{\cross}{\cdot}
\newcommand{\g}{{\frak g}}
\newcommand{\ga}{{\frak a}}
\newcommand{\cG}{{\cal G}}
\newcommand{\Z}{{\bf Z}}
\newcommand{\Aut}{{\cal A}{\it ut}}
\newcommand{\Tr}{{\cal T}{\it r}}
\begin{document}
\title{Twisted derivations of Hopf algebras}
\author{A. Davydov}
\maketitle
\date{}
\begin{center}
Department of Mathematics and Statistics, University of New Hampshire, Durham, NH 03824,
USA
\end{center}
\begin{center}
alexei1davydov@gmail.com
\end{center}

\begin{abstract}
\noindent 
In the paper we introduce the notion of twisted derivation of a bialgebra.
Twisted derivations appear as infinitesimal symmetries of the category of representations.
More precisely they are infinitesimal versions of twisted automorphisms of bialgebras  \cite{da}.
Twisted derivations naturally form a Lie algebra (the tangent algebra of the group of twisted automorphisms).
Moreover this Lie algebra fits into a crossed module (tangent to the crossed module of twisted automorphisms).
Here we calculate this crossed module for universal enveloping algebras and for the Sweedler's Hopf algebra.
\end{abstract}
\tableofcontents

\section*{Introduction}
The paper studies infinitesimal symmetries of bialgebras which manifest themselves in representation theory.
It is very well-known that categories of representations (modules) of bialgebras are examples of so-called {\em
tensor categories}. It is less acknowledged that relations between bialgebras ({\em homomorphisms of bialgebras}) do
not capture all relations ({\em tensor functors}) between their representation categories. An algebraic notion which
does the job ({\em bi-Galois (co)algebra}) is known only to specialists. Fully representing tensor relations between
representation categories it is sometimes not very easy to work with. For example, composition of tensor functors
corresponds to tensor product of Galois (co)algebras and very often is quite tricky to calculate explicitly. At
the same time tensor functors of interest could have some additional properties which put restrictions on the
corresponding algebraic objects and allow one to have an alternative and perhaps simpler description.

Note that any representation category is equipped with a natural tensor functor to the category of vector spaces,
the functor forgetting the action of the bialgebra (the {\em forgetful functor}). In \cite{da} we dealt with tensor functors
between representation categories which preserve (not necessarily tensorly) the forgetful functors. The corresponding
algebraic notion is of {\em twisted homomorphism}. Composition of tensor functors correspond to composition operation on twisted homomorphism. Natural transformations of tensor functors correspond to certain relation which was called in \cite{da} 
({\em gauge transformations}).
The main object of study in \cite{da} was the category (groupoid) of twisted automorphisms of a Hopf algebra and its actions on (certain structures on) categories of representations.

In this paper we define the infinitesimal analog of the notion of twisted automorphism which we call {\em twisted derivation}. A twisted derivation of a bialgebra $H$ is a pair $(d,\phi)$ where $d:H\to H$ is an algebra derivation and $\phi$ is an element of $H\otimes H$, such that
$$(I\otimes d+d\otimes I)(\Delta(x)) - \Delta(d(x)) = [\phi,\Delta(x)],\quad \forall x\in H,$$
$$1\otimes\phi + (I\otimes\Delta)(\phi) = \phi\otimes 1 + (\Delta\otimes I)(\phi),$$
$$\ve d = 0,\quad (\varepsilon\otimes I)(\phi) = (I\otimes\varepsilon)(\phi) = 0.$$
Here $\Delta:H\to H\otimes H$ is the comultilication and $\varepsilon:H\to k$ is the counit of $H$. Twisted derivations form a Lie algebra $Der_{tw}(H)$ with the bracket
$$[(d,\phi),(d',\phi')] = ([d,d'], (d\otimes I+I\otimes d)(\phi') - (d'\otimes I+I\otimes d')(\phi) - [\phi,\phi']).$$ This is the tangent bracket to the group operation on twisted automorphisms.
An infinitesimal analog of gauge transformations of twisted automorphisms is the notion of {\em gauge transformation} of twisted derivations. A gauge transformation from a twisted derivation $(d,\phi)$ to a twisted derivation $(d',\phi')$ is an element $a$ of $H$ such that 
$$(d'-d)(x) = [a,x], \quad \forall x\in H,$$ $$\phi' - \phi = (a\otimes 1 + 1\otimes a) - \Delta(a).$$ Addition in $H$ gives rise to composition operation on gauge transformations turning twisted derivations into a {\em crossed module} of Lie algebras:
$$Der_{tw}(H)\stackrel{\partial}{\leftarrow} H.$$
Here $$\partial(a) = ([a,\ ],(a\otimes 1 + 1\otimes a) - \Delta(a)).$$
Recall (from \cite{g}) that a {\em crossed module of Lie algebras} is a homomorphism of Lie algebras $\partial:{\frak n}\to{\frak p}$ together with an action of $\frak p$ on $\frak n$ by derivations such 
that 
$$\partial(p(n)) = [p, \partial(n)] \quad\quad \forall p\in{\frak p}, n\in {\frak n}, $$
$$\partial(n)(m) = [n,m]  \quad\quad \forall  n,m\in {\frak n}.$$
The equivalence class of a crossed module ${\frak p}\stackrel{\partial}{\leftarrow}{\frak n}$ is controlled by two Lie algebras 
$$\pi_0({\frak p}\stackrel{\partial}{\leftarrow}{\frak n}) = coker(\partial),\quad\quad \pi_1({\frak p}\stackrel{\partial}{\leftarrow}{\frak n}) = ker(\partial)$$ and a (Lie algebra) cohomology class (the {\em Jacobiator})
$$J\in H^3(\pi_0,\pi_1).$$ See \cite{h1,h2,g} for details.

For the crossed module of twisted derivations $Der_{tw}(H)\stackrel{\partial}{\leftarrow} H$ we have associated two Lie algebras: the algebra of outer twisted derivations
$$OutDer_{tw}(H) = \pi_0(Der_{tw}(H)\stackrel{\partial}{\leftarrow} H)$$ 
and the abelian Lie algebra of {\em central primitive} elements
$$Z(H)\cap Prim(H) = \pi_1(Der_{tw}(H)\stackrel{\partial}{\leftarrow} H).$$ Here $Prim(H) = \{x\in H| \Delta(x) = x\otimes 1+1\otimes x\}$ and $Z(H) = \{x\in H|\ xy = yx\ \forall y\in H\}$. The Jacobiator is a cohomology class in $$H^3(OutDer_{tw}(H),Z(H)\cap Prim(H)).$$

As an example we treat the case of the universal enveloping algebra $U(\g)$ of a Lie algebra $\g$. It turns out that in characteristic zero any twisted derivation is gauge equivalent to a bialgebra derivation together with an invariant infinitesimal twist ({\em
separated} case). Gauge classes of invariant twists form an abelian Lie algebra isomorphic to $(\Lambda^2\g)^\g$, the invariant elements of
the exterior square of the Lie algebra $\g$. The Lie algebra of gauge classes of twisted derivations (the Lie algebra of outer twisted derivations) is a crossed product of the
Lie algebra of outer derivations of  $\g$ and the abelian Lie algebra  of invariant twists:
$$OutDer_{tw}(U(\g)) = OutDer(\g)\ltimes (\Lambda^2\g)^\g.$$
The space of central primitive elements of $U(\g)$ coincides with the centre of $\g$:
$$Z(U(\g))\cap Prim(U(\g)) = Z(\g).$$
We also compute the Jacobiator of $Der_{tw}(U(\g))$. In particular we show that its restriction to the abelian subalgebra of invariant twists $(\Lambda^2\g)^\g$ is zero.

We present a construction extending a bialgebra $H$ with an infinitesimal twist $\phi$ to a bialgebra $E(H)$ with a derivation $d:E(H)\to E(H)$ ($E(H)$ is the free algebra with derivation generated by the algebra $H$) such that $(d,\phi)$ is a twisted derivation of $E(H)$. This construction provides examples of non-separated twisted derivations.

We also look at twisted derivations of a non-commutative, non-cocommutative Sweedler's Hopf algebra $H_4$. Here again any twisted derivation is separated. The Lie algebra of bialgebra derivations and of invariant twists are both 1-dimensional. A non-zero bialgebra derivation acts non-trivially on invariant twists. Thus $OutDer_{tw}(H_4)$ is the 2-dimensional non-abelian Lie algebra. 

\section*{Acknowledgment}
The author would like to thank J. Cuadra and D. Nikshych for stimulating discussions.
Special thanks are to I. Owtscharenko for inspiration. 
 
\section{Twisted automorphisms and twisted derivations of bialgebras}
Throughout the paper $k$ be a ground field.

\subsection{Twisted automorphisms of bialgebras}
Here we recall (from \cite{da}) the notions of twisted automorphisms of a bialgebra,
their transformations and their relation with tensor autoequivalences of the category of representation.

A {\em twisted automorphism} of a bialgebra $H$ is a pair $(f,F)$,
where $f:H\to H$ is an algebra automorphism and $F$ is an invertible element of $H\otimes H$ (an {\em $f$-twist} or
simply {\em twist}) such that
\begin{equation}\label{conj}
F\Delta(f(x)) = (f\otimes f)(\Delta(x))F,\quad \forall x\in H,
\end{equation}
\begin{equation}\label{coc}
(F\otimes 1)(\Delta\otimes I)(F) = (1\otimes F)(I\otimes\Delta)(F),\quad \mbox{2-cocycle condition}
\end{equation}
\begin{equation}\label{norm}
\ve f = \ve,\quad (\varepsilon\otimes I)(F) = (I\otimes\varepsilon)(F) = 1,\quad \mbox{normalisation}.
\end{equation} 
Here and later on $\Delta:H\to H\otimes H$ is the coproduct and $\varepsilon:H\to k$ is the counit of $H$. 

For example, a bialgebra automorphism $f:H\to H$ is a twisted homomorphism with the identity twist $(f,1)$. A
twisted automorphism is {\em separated} if the first component $f:H\to H$ is a bialgebra automorphism. For a
separated twisted homomorphism the condition (\ref{conj}) amounts to the invariance of the twist with respect to the diagonal
sub-bialgebra $\Delta(H)\subset H\otimes H$: $$\Delta(f(x))F = F\Delta(f(x)),\quad \forall x\in H.$$ 

The {\em composition} of twisted automorphisms $(f,F)$ and $(f',F')$ is
\begin{equation}\label{comp}(f,F)\circ (f',F') = (ff',f(F')F).\end{equation} Here $f(F') = (f\otimes f)(F')$. It is not hard to verify that the result is a
twisted automorphism and that the composition is associative.
Note that separated twisted automorphisms are closed under composition.

By a {\em gauge transformation} $(f,F)\to(f',F')$ of twisted automorphisms $(f,F),(f',F')$ of $H$ we will mean an invertible element $a$ of $H$ such that
\begin{equation}\label{comgt}
af(x) = f'(x)a,\quad \forall x\in H,
\end{equation}
\begin{equation}\label{mongt}
F'\Delta(a) = (a\otimes a)F.
\end{equation}
We will depict it graphically as follows: 

$$\xymatrix{
H {\ar@/^20pt/[rr]^{(f,F)} }
{\ar@/_20pt/[rr]_{(f',F')}  } & \Downarrow a & H \\
} $$
 Note that the condition
(\ref{mongt}) together with normalisation conditions for $F$ and $G$ implies $\varepsilon(a) = 1$. Note also that a
twisted homomorphism gauge isomorphic to a separated twisted homomorphism is not necessarily separated.

For successive gauge transformations
$$
\xygraph{ !{0;/r3.2pc/:;/u4.3pc/::}[]*+{H} 
(
:@/^30pt/[r(3)]*+{H}="r" ^{(f,F)}
,
:@/_30pt/"r" _{(j,J)}
,
:@{}[r(1.5)u(.3)]*+{\Downarrow a}
,
:@{}[r(1.5)d(.4)]*+{\Downarrow b}
,
:"r" _{(g,G)}
) 
}$$
the composition $b.a:(f,F)\to(j,J)$ is simply the product $ba$ in $H'$. Again it is quite
straightforward to check that this is a transformation.

We can also define compositions of transformations and twisted automorphisms in the following two situations:
$$\xymatrix{
H {\ar@/^20pt/[rr]^{(f,F)}} {\ar@/_20pt/[rr]_{(f',F')}} &  \Downarrow a &
H \ar[rr]^{(g,G)} & & H  }$$
$$\xymatrix{H \ar[rr]^{(f,F)} & & H {\ar@/^20pt/[rr]^{(g,G)}}
{\ar@/_20pt/[rr]_{(g',G')}} &  \Downarrow b & H }$$ we define it to be
$(g,G)\circ a = g(a)$ in the first case and $b\circ(f,F) = b$ in the second. The following properties intertwining
compositions of twisted homomorphisms and gauge transformations are quite straightforward consequences of the
definitions: $$(a.b)\circ(f,F) = (a\circ(f,F)).(b\circ(f,F)),$$ $$(g,G)\circ(a.b) = ((g,G)\circ a).((g,G)\circ b),$$
$$(a\circ(f,F)).((g,G)\circ b) = ((g,G)\circ b).(a\circ(f,F)).$$ 
Note (see also \cite{da} for details) that the structures described above turn twisted automorphisms and their gauge transformations into a categorical group $\Aut_{tw}(H)$.

Recall (see \cite{bs} for the history) that Cat-groups are the same as crossed modules of Whitehead
(\cite{wh}). A {\em crossed module} of groups is a pair of groups $P,C$ with a (left) action of $P$ on $C$
(by group automorphisms): $$P\times C\to C,\quad (p,c)\mapsto ^{p}{c}$$ and a homomorphism of groups $$P
\stackrel{\partial}{\leftarrow} C$$ such that $$\partial(^{p}{c}) = p\partial(c)p^{-1},\quad  ^{\partial(c)}{c'} =
cc'c^{-1}.$$ 
A {\em
map} of crossed modules $(P,C)\to (E,N)$ is a triple $(\tau,\nu,\theta)$ where $\tau$ and $\nu$ are maps making the
diagram $$\xymatrix{P \ar[d]^\tau & C\ar[l]^{\partial} \ar[d]^\nu \\ E & N \ar[l]^{\partial} }$$ commutative, and
$\theta:P\times P\to N$ is such that $$\tau(pq) = \partial(\theta(p,q))\tau(p)\tau(q),\quad p,q\in P,$$ $$\nu(ab) =
\theta(\partial(a),\partial(b))\nu(a)\nu(b),\quad a,b\in C,$$ $$\theta(p,qr) {^{\tau(p)}{\theta(q,r)}} =
\theta(pq,r)\theta(p,q),\quad p,q,r\in P,$$ $$\theta(1,q) = 1 = \theta(p,1),\quad p,q\in P,$$ $$\nu( ^{p}{a}) =
\theta(p,\partial(a)) ^{\tau(p)}{\nu(a)}.$$

Complete invariants of a categorical-group $\cG$ with respect to monoidal equivalences are $$\pi_0(\cG),\ \ \pi_1(\cG),\ \ 
\phi\in H^3(\pi_0(\cG),\pi_1(\cG)),$$ where the first is the group of isomorphism classes of objects, the second is the
abelian group ($\pi_0(\cG)$-module) $Aut_\cG(I)$ of automorphisms of the unit object and the third is a cohomology
class (the {\em associator}). In the crossed module setting $$\pi_0 = coker(\partial),\ \ \pi_1 = ker(\partial).$$ Note
that the image of $\partial$ is normal so the cokernal has sense. The class $\phi$ is defined as follows: choose a
section $\sigma:coker(\partial)\to P$ and a map $a:coker(\partial)\times coker(\partial)\to C$ such that $$\sigma(fg) =
\partial(a(f,g))\sigma(f)\sigma(g),\quad  f,g\in coker(\partial).$$ Then for any $f,g,h\in coker(\partial)$ the
expression $$a(f,gh) ^{\sigma(f)}{a(g,h)}a(f,g)^{-1}a(fg,h)$$ is always in the kernel  of $\partial$ and is a group
3-cocycle of $coker(\partial)$ with coefficients in $ker(\partial)$. The cohomology class $\phi$ of this 3-cocycle does not depend on the choices made.

The crossed module of groups corresponding to the categorical group of twisted automorphisms has
the form 
\begin{equation}\label{gcm}
Aut_{tw}(H) \stackrel{\partial}{\leftarrow} H_\varepsilon^\cross.
\end{equation} 
Here $Aut_{tw}(H)$ is the group of
twisted automorphisms of $H$ with respect to the composition, $H_\varepsilon^\cross$ is the group of invertible
elements $x$ of $H$ such that $\varepsilon(x)=1$, and $\partial$ sends $x$ into the pair (an {\em inner} twisted
automorphism) $( ^{x}{(\ )},(x\otimes x)\Delta(x)^{-1})$ where the first component is the conjugation automorphism:
$$^{x}{(\ )}:H\to H,\quad ^{x}{(y)} = xyx^{-1}.$$ The action of $Aut_{tw}(H)$ on $H^\cross$ is given by the action of
the first component $(f,F)(y) = f(y)$.

There are two important Cat-subgroups in $\Aut_{tw}(H)$. The first is the full Cat-subgroup $\Aut^1_{tw}(H)$ of
twisted automorphisms with the identity as the first component. Its crossed module is $$Aut^1_{tw}(H)
\stackrel{\partial}{\leftarrow} (Z(H)_\varepsilon)^\cross.$$ Here $Aut^1_{tw}(H)$ is the group of {\em invariant
twists} on $H$ (invertible elements of $H\otimes H$ commuting with the image $\Delta(H)$ and satisfying the 2-cocycle
condition), $(Z(H)_\varepsilon)^\cross$ is the group of invertible elements of the centre of counit 1:
$\varepsilon(x)=1$ . Again $\partial$ assigns to $x$ the invariant twist $(x\otimes x)\Delta(x)^{-1}$. The action of
$Aut^1_{tw}(H)$ on $(Z(H)_\varepsilon)^\cross$ is trivial.

The second is the full Cat-subgroup $\Aut_{bialg}(H)$ of bialgebra automorphisms of $H$. Here the crossed module is
$$Aut_{bialg}(H) \stackrel{\partial}{\leftarrow} G(H),$$ where $Aut_{bialg}(H)$ is the group of automorphisms of $H$ as
a bialgebra, $G(H)=\{x\in H,\quad \Delta(x) = x\otimes x\}$ is the group of group-like elements of $H$ and $\partial$
sends $x$ into the conjugation automorphism. The action of $Aut_{bialg}(H)$ on $G(H)$ is obvious.

Note that the Cat-subgroup $\Aut^1_{tw}(H)$ is what might be called {\em normal}: the components of its crossed module
are normal subgroups in the components of the crossed module for $Aut_{tw}(H)$ and the action of $Aut_{tw}(H)$ on
$H^\cross$ preserve the subgroup $Z(H)^\cross$.

The Cat-subgroup $\Aut_{bialg}(H)$ is not in general normal. In \cite{da} it was characterised as the stabiliser
of a certain action. 

\bre
Recall from \cite{da} the following categorical interpretation of twisted automorphisms and their gauge transformations.
For a homomorphism of algebras $f:H\to H'$ there is defined the {\em inverse image} functor $f^*:H'-Mod\to H-Mod$, which
turns an $H'$-module $M$ into an $H$-module $f^*(M)$. As a vector space, $f^*(M)$ is the same as $M$ but with a
new module structure $x.m = f(x)m$ for $x\in H$ and $m\in M$. 

For a twisted automorphism $(f,F):H\to H$ the inverse image functor $f^*:H-Mod\to H-Mod$ becomes tensor, with the
tensor structure given by multiplication by the twist: $$F_{M,N}:f^*(M\otimes H)\to f^*(M)\otimes f^*(N),\quad
m\otimes n\mapsto F(m\otimes n).$$ Compositions of twisted homomorphisms and corresponding functors are related as
follows: $$((f,F)\circ(g,G))^* = (g,G)^*\circ(f,F)^*.$$ A gauge transformation $a:(f,F)\to (g,G)$ defines a tensor
natural transformation $a:(f,F)^*\to (g,G)^*$: $$a_M:f^*(M)\to g^*(M),\quad m\mapsto am.$$ Compositions of gauge
transformations correspond to compositions of natural transformations.

It is straightforward to see that the condition (\ref{conj}) guarantees $H$-linearity of the tensor structure
$F_{M,N}$ while the 2-cocycle condition for $F$ is equivalent to the coherence axiom for the tensor structure. Similarly, the
condition (\ref{comgt}) for a gauge transformation $a$ says that $a_M$ is a morphism of $H$-modules and the condition
(\ref{mongt}) is equivalent to the monoidality of $a_M$.
\ere

\subsection{Infinitesimal twisted automorphisms}

Here we look at infinitesimal twisted automorphisms and their guage transformations. This leads us to the notion of twisted derivation. 

Let $h$ be the dual number, i.e. $h^2=0$. Let $(f,F)$ be a twisted automorphism of a bialgebra $H$, defined over the algebra of dual numbers $k[h]$, such that
\begin{equation}\label{inft}
f = I + h d,\quad F = 1\otimes 1 + h \phi,
\end{equation} 
for a (multiplicative) derivation $d:H\to H$ and an element $\phi\in H^{\otimes 2}$.
\begin{lem}
The defining conditions (\ref{conj},\ref{coc},\ref{norm}) of a twisted automorphism are equivalent to the following equations on a derivation $d$ and an element $\phi$:
\begin{equation}\label{conjd}(I\otimes d+d\otimes I)(\Delta(x)) - \Delta(d(x)) = [\phi,\Delta(x)],\quad \forall x\in H,\end{equation}
\begin{equation}\label{cocd}1\otimes\phi + (I\otimes\Delta)(\phi) = \phi\otimes 1 + (\Delta\otimes I)(\phi),\end{equation}
\begin{equation}\label{normd}\ve d = 0,\quad (\varepsilon\otimes I)(\phi) = (I\otimes\varepsilon)(\phi) = 0.\end{equation}
\end{lem}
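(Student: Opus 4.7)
The plan is to substitute the infinitesimal ansatz (\ref{inft}) directly into each of the defining conditions (\ref{conj}), (\ref{coc}), (\ref{norm}), expand modulo $h^2$, and read off the coefficients of $h$. Each of the three resulting equations will correspond exactly to one of (\ref{conjd}), (\ref{cocd}), (\ref{normd}). Since the $O(1)$ terms are automatically satisfied (the identity automorphism with trivial twist is a twisted automorphism), everything reduces to the linearized conditions at order $h$. Note that $F=1\otimes 1+h\phi$ is automatically invertible over $k[h]$ (with inverse $1\otimes 1-h\phi$), so the invertibility requirement imposes nothing extra.

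First I would treat the conjugation condition (\ref{conj}). Writing
\[F\Delta(f(x)) = (1\otimes 1+h\phi)\bigl(\Delta(x)+h\Delta(d(x))\bigr) = \Delta(x)+h\bigl(\Delta(d(x))+\phi\Delta(x)\bigr)\pmod{h^2},\]
and
\[(f\otimes f)(\Delta(x))F = \bigl(\Delta(x)+h(d\otimes I+I\otimes d)\Delta(x)\bigr)(1\otimes 1+h\phi) = \Delta(x)+h\bigl((d\otimes I+I\otimes d)\Delta(x)+\Delta(x)\phi\bigr)\pmod{h^2},\]
and equating the order-$h$ terms produces
\[(I\otimes d+d\otimes I)(\Delta(x))-\Delta(d(x)) = \phi\Delta(x)-\Delta(x)\phi = [\phi,\Delta(x)],\]
which is (\ref{conjd}). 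The 2-cocycle condition (\ref{coc}) is handled the same way: both $(F\otimes 1)(\Delta\otimes I)(F)$ and $(1\otimes F)(I\otimes\Delta)(F)$ expand to $1+h(\text{linear in }\phi)\pmod{h^2}$, and equating the linear parts yields (\ref{cocd}). Finally, the normalisation (\ref{norm}) gives $\varepsilon f=\varepsilon+h\varepsilon d=\varepsilon$ (so $\varepsilon d=0$) and $(\varepsilon\otimes I)(F) = 1+h(\varepsilon\otimes I)(\phi)=1$, and likewise for $(I\otimes\varepsilon)(F)$, yielding (\ref{normd}).

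For the converse direction I would run the same expansions in reverse: given $d,\phi$ satisfying (\ref{conjd})--(\ref{normd}), the formulas $f=I+hd$ and $F=1\otimes 1+h\phi$ satisfy (\ref{conj})--(\ref{norm}) modulo $h^2$, which is all that is required over $k[h]$. The only prerequisite to check is that $f=I+hd$ is an algebra automorphism of $H\otimes_k k[h]$ iff $d$ is a $k$-linear derivation of $H$; this is the standard fact that $f(xy)=f(x)f(y)$ modulo $h^2$ reads as $d(xy)=d(x)y+xd(y)$, and $f$ is invertible with inverse $I-hd$.

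There is no real obstacle here; the computation is entirely formal. The only point that warrants care is the non-commutativity in (\ref{conj}), where one must preserve the order $F\Delta(f(x))$ versus $(f\otimes f)(\Delta(x))F$ so that the commutator $[\phi,\Delta(x)]$ appears with the correct sign. Everything else is a direct bookkeeping of the $h$-linear parts.
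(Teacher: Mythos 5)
Your proposal is correct and follows exactly the paper's approach: the paper's proof is the one-line observation that the equations (\ref{conjd}), (\ref{cocd}), (\ref{normd}) follow from comparing coefficients of $h$ in (\ref{conj}), (\ref{coc}), (\ref{norm}) respectively, which is precisely the expansion you carry out (with the order-of-multiplication check in (\ref{conj}) done correctly).
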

\begin{proof}
The equations (\ref{conjd},\ref{cocd},\ref{normd}) follow from the comparison of coefficients for $h$ in the equations (\ref{conj},\ref{coc},\ref{norm}) respectively.
\end{proof}

We call a pair $(d,\phi)$ satisfying (\ref{conjd},\ref{cocd},\ref{normd}) a {\em twisted derivation} of the bialgebra $H$. Denote by $Der_{tw}(H)$ the set of twisted derivations of bialgebra $H$.

Call a twisted automorphism (\ref{inft}) the {\em infinitesimal twisted automorphism} corresponding to a twisted derivation $(d,\phi)$.
Composition of infinitesimal twisted automorphisms corresponds to addition of twisted derivations:
$$(I + h d,1\otimes 1 + h \phi)\circ(I + h d',1\otimes 1 + h \phi') = (I + h (d+d'),1\otimes 1 + h(\phi+\phi')).$$

The infinitesimal twisted automorphism corresponding to a twisted derivation $(d,\phi)$ is {\em separated} if $d$ is a bialgebra derivation:
$$(I\otimes d+d\otimes I)(\Delta(x)) = \Delta(d(x)).$$ Note that $\phi$ for a separated twisted derivation is invariant with respect to $\Delta(H)$ and that the pair $(0,\phi)$ is a twisted derivation.  
Note also that the notion of separated twisted derivation is not gauge invariant. We call a twisted derivation {\em separable} if it is gauge equivalent to a separated one. 

Now suppose that $h^3=0$ and consider the next order infinitesimal twisted automorphisms of $H$, i.e. twisted automorphisms of the form
$$f = I + h d + h^2\delta,\quad F = 1\otimes 1 + h \phi + h^2\psi.$$
The commutator of two such twisted automorphisms has a form
$$[(I + h d + h^2\delta,1\otimes 1 + h \phi + h^2\psi),(I + h d' + h^2\delta',1\otimes 1 + h \phi' + h^2\psi')] =$$ $$\big(h^2[d,d'], h^2\big((d\otimes I+I\otimes d)(\phi') - (d'\otimes I+I\otimes d')(\phi) - [\phi,\phi']\big)\big).$$
This in particular shows that the operation 
\be\lb{libr}[(d,\phi),(d',\phi')] = ([d,d'], (d\otimes I+I\otimes d)(\phi') - (d'\otimes I+I\otimes d')(\phi) - [\phi,\phi'])\ee is a Lie bracket on the vector space $Der_{tw}(H)$ of twisted derivations. Of course this fact can be checked directly.

Now we go back to first order infinitesimal twisted automorphisms (assuming again that $h^2=0$). Consider two such twisted automorphisms $(f,F), (f',F')$, corresponding to twisted derivations $(d,\phi)$ and $(d',\phi')$. Let $a:(f,F)\to(f',F')$ be a gauge transformation of the form
$$a = 1 + h \alpha,\quad \alpha\in H.$$
\begin{lem}
The conditions (\ref{comgt},\ref{mongt}) are equivalent to the following
$$(d'-d)(x) = [a,x], \quad \forall x\in H,$$ \be\label{gt}\phi' - \phi = (a\otimes 1 + 1\otimes a) - \Delta(a).\ee 
\end{lem}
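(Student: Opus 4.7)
The plan is to substitute the infinitesimal expansions directly into (\ref{comgt}) and (\ref{mongt}) and read off the coefficient of $h$, using $h^2=0$. There is no real obstacle: everything reduces to linear algebra in $k[h]/(h^2)$. The only thing to watch is a mild notational abuse—the element of $H$ appearing in the target equations is really the infinitesimal part $\alpha$ of $a = 1 + h\alpha$, so one must track which $a$ refers to the gauge element and which to its linearisation.

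For condition (\ref{comgt}), I would expand
\[
a f(x) = (1 + h\alpha)(x + h\,d(x)) = x + h\bigl(d(x) + \alpha x\bigr),
\]
\[
f'(x) a = (x + h\,d'(x))(1 + h\alpha) = x + h\bigl(d'(x) + x\alpha\bigr),
\]
so equating coefficients of $h$ gives $d'(x) - d(x) = \alpha x - x\alpha = [\alpha,x]$, which is the first equation of (\ref{gt}). Conversely, if $(d'-d)(x) = [\alpha,x]$ holds, reversing the expansion recovers (\ref{comgt}) modulo $h^2$.

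For condition (\ref{mongt}), I would compute both sides similarly, using $\Delta(1+h\alpha) = 1\otimes 1 + h\,\Delta(\alpha)$ and $(1+h\alpha)\otimes (1+h\alpha) = 1\otimes 1 + h(\alpha\otimes 1 + 1\otimes \alpha)$ (where the $h^2\,\alpha\otimes\alpha$ term vanishes). One then gets
\[
F'\Delta(a) = 1\otimes 1 + h\bigl(\phi' + \Delta(\alpha)\bigr),
\]
\[
(a\otimes a) F = 1\otimes 1 + h\bigl(\alpha\otimes 1 + 1\otimes \alpha + \phi\bigr),
\]
and equating coefficients of $h$ yields $\phi' - \phi = (\alpha\otimes 1 + 1\otimes \alpha) - \Delta(\alpha)$, which is the second equation of (\ref{gt}). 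The converse is again immediate by reassembling the expansion. Combining the two expansions, the equivalence of (\ref{comgt}), (\ref{mongt}) with (\ref{gt}) follows.
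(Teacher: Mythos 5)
Your proposal is correct and is exactly the argument the paper intends: its proof is the one-line remark that the equations follow from comparing coefficients of $h$ in (\ref{comgt}) and (\ref{mongt}), and you have simply written out that comparison explicitly (including the correct observation that the $a$ in the lemma's conclusion is really the linearisation $\alpha$).
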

\begin{proof}
Follow from the comparison of coefficients for $h$ in the equations (\ref{comgt},\ref{mongt}) respectively.
\end{proof}
We call $a\in H$ satisfying the conditions (\ref{gt}) a {\em gauge transformation} between twisted derivations $(d,\phi)$ and $(d',\phi')$.

Define a map $\partial:H_\varepsilon\to Der_{tw}(H)$ by
$$\partial(a) = ([a,\ ],(a\otimes 1 + 1\otimes a) - \Delta(a)).$$ Here $H_\varepsilon = ker(\varepsilon)$ is the kernel of the counit (the augmentation ideal). 
The above calculations show that this map is a crossed module of Lie algebras with respect to the $Der_{tw}(H)$-action on $H$:
$$(d,\phi)(\alpha) = d(\alpha),\quad \alpha\in H.$$
More precisely the following statement is established by the calculations of this section.
\bth
The crossed module of Lie algebras $\partial:H_\varepsilon\to Der_{tw}(H)$ is tangent to the crossed module of groups (\ref{gcm}):
$$Aut_{tw}(H) \stackrel{\partial}{\leftarrow} H_\varepsilon^\cross.$$
\eth

Note the $ker(\partial)$ coincides with the space $Z(H)\cap Prim(H)$ of central primitive elements of $H$.
We call the Lie algebra
$$OutDer_{tw}(H) = coker(\partial)$$ the algebra of {\em outer twisted derivations}.

There are two important sub-crossed modules of Lie algebras in $Der_{tw}(H)$. The first is the sub-crossed module $$Der^0_{tw}(H)
\stackrel{\partial}{\leftarrow} Z(H)_\varepsilon.$$ Here $Der^0_{tw}(H)$ is the Lie algebra (with respect to commutator) of {\em invariant
infinitesimal twists} on $H$ (elements of $H_\ve\otimes H_\ve$ commuting with the image $\Delta(H)$ and satisfying the 2-cocycle
condition (ref{})), and $Z(H)_\varepsilon = Z(H)\cap H_\varepsilon$. The map $\partial$ assigns to $x$ the invariant infinitesimal twist $(x\otimes x)-\Delta(x)$. The action of
$Der^0_{tw}(H)$ on $Z(H)_\varepsilon$ is trivial. 
We denote by $$OutDer^0_{tw}(H) = coker(\partial:Z(H)_\varepsilon\to Der^0_{tw}(H))$$ the Lie algebra of {\em outer invariant infinitesimal twists}.

The second is the sub-crossed module of bialgebra derivations of $H$:
$$Der_{bialg}(H) \stackrel{\partial}{\leftarrow} Prim(H),$$ where $Der_{bialg}(H)$ is the Lie algebra of derivations of $H$ as
a bialgebra. The map $\partial$
sends $x$ into the conjugation derivation $[x,-]$. The action of $Der_{bialg}(H)$ on $Prim(H)$ is the obvious one.
We denote by $$OutDer_{bialg}(H) = coker(\partial:Prim(H)\to Der_{bialg}(H))$$ the Lie algebra of {\em outer bialgebra derivations}.

Note that the crossed submodule $Der^0_{tw}(H)$ is what might be called {\em normal}: the components 
are Lie ideals in the components of the crossed module $Der_{tw}(H)$ and the action of $Der_{tw}(H)$ on
$H$ preserve the subspace $Z(H)$.

In the case when any twisted derivation is gauge equivalent to a separated one {the {\em separated case}) the Lie algebra of outer twisted derivations is the semi-direct product:
$$OutDer_{tw}(H) = OutDer_{bialg}(H)\ltimes OutDer^0_{tw}(H).$$

\subsection{Twisted derivations of quasi-triangular bialgebras}

Recall that a {\em quasi-triangular structure} on a bialgebra $H$ is an invertible element $R\in H\otimes H$ (a {\em
universal R-matrix}) satisfying
\begin{equation}\label{conjr}
Rt\Delta (x) = \Delta (x)R \quad \forall x\in H,
\end{equation}
along with {\em triangle equations}: 
\beq\lb{treq}
(I\otimes \Delta)(R) = R_{23}R_{13},\quad\quad (\Delta\otimes I)(R) = R_{12}R_{13}.
\eeq
Here $R_{12} =
R\otimes 1$, $R_{13} = (I\otimes t)(R_{12})$ etc., where $t:H\ot H\to H\ot H$ denotes the transposition of tensor factors.

Denote by $\Tr(H)$ the set of universal R-matrices.

\bre
Recall that a triangular structure $R$ on a bialgebra $H$ defines a braiding: $$c_{M,N}:M\otimes N\to N\otimes N,\quad
m\otimes n\mapsto R(n\otimes m)$$ on the category $H-Mod$. 

Indeed the condition (\ref{conjr}) implies that $c_{M,N}$ is a morphism of $H$-modules: $$c_{M,N}(\Delta(x)(m\otimes
n)) = Rt\Delta(x)(n\otimes m) = \Delta(x)R(n\otimes m) = \Delta(x)c_{M,N}(m\otimes n).$$ The triangle equations are
equivalent to the hexagon axioms for the braiding.
\ere

The interpretation of quasi-triangular structures as braidings allows us to define an action of twisted automorphisms on quasi-triangular structures. 
For a twisted automorphism $(f,F)$ and a quasi-triangular structure $R$ on $H$ the {\em twisted} quasi-triangular structure is defined as follows:
\beq\lb{twr}
{R}^{(f,F)}= F^{-1}(f\ot f)(R)F_{21}.
\eeq
It is straightforward to verify that the properties of the $R$-matrix are
preserved. Moreover, gauge isomorphic twisted automorphisms act equally. Indeed, for $g(x) = af(x)a^{-1}$ and $G = (a\otimes a)F\Delta(a)^{-1}$, 
$${R}^{(g,G)} = \Delta(a)F^{-1}(a\otimes a)^{-1}(a\otimes a)(f\ot f)(R)(a\otimes a)^{-1}(a\otimes a)F_{21}t\Delta(a)^{-1} =$$ $$ = \Delta(a)F^{-1}(f\ot f)(R)F_{21}t\Delta(a)^{-1} = {R}^{(f,F)}.$$ 
Thus an action of
the group $Out_{tw}(H)$ on the set $\Tr(H)$ of universal R-matrices is defined. 
For $R\in\Tr(H)$ denote by $Out_{tw}(H,R)$ the stabiliser $St_{Out_{tw}(H)}(R)$:
\beq\lb{str}Out_{tw}(H,R) = \{(f,F)\in Out_{tw}(H)|\ (f\otimes f)(R)F_{21} = FR\}.\eeq
\bre
Note that the group $Out_{tw}(H,R)$ naturally embeds in to the group of isomorphism classes of braided tensor autoequivalences of $H-Mod$ with the braiding given by $R$.
\ere

Here we look at the infinitesimal version of the action of twisted automorphisms on R-matrices. 
The tangent space $T_R(\Tr(H))$ to the space $\Tr(H)$ of universal R-matrices at a point $R\in \Tr(H)$ is the vector space 
\beq\lb{ttr}
\{r\in H^{\ot 2}|\ (I\otimes \Delta)(r) = r_{23}R_{13}+R_{23}r_{13},\quad(\Delta\otimes I)(r) = r_{12}R_{13}+R_{12}r_{13},\quad \Delta (x)r = rt\Delta (x) \quad \forall x\in H\}.
\eeq
Indeed, writing an R-matrix as $R+hr$ with $h^2=0$ transforms \eqref{conjr} and \eqref{treq} into the defining conditions in \eqref{ttr}. 
Similarly the tangent Lie algebra of the stabiliser \eqref{str} is 
$$OutDer_{tw}(H,R) = \{(d,\phi)\in OutDer_{tw}(H)|\ (I\otimes d+d\otimes I)(R) = \phi_{21}R-R\phi\}.$$
Expanding the ingredients of \eqref{twr} in powers of $h$ with $h^3=0$ and looking at the change in the coefficient for $h^2$ we get the following formula for the action of twisted derivations on infinitesimal R-matrices:
\beq\lb{twrc}
(d,\phi)(r) = (I\otimes d+d\otimes I)(r) - \phi r + r\phi_{21}.
\eeq
The above implies to the following
\bpr
The formula \eqref{twrc} defines the structure of an $OutDer_{tw}(H,R)$-module on $T_R(\Tr(H))$.
\epr

\bre
Let $H$ be a cocommutative bialgebra. 
It is known that infinitesimal quasi-triangular structures (i.e. $R$-matrices of the form $R=1+hr$) on $H$ correspond to invariant {\em classical R-matrices}, i.e. elements $r\in H^{\ot 2}$ satisfying
$$
r\Delta (x) = \Delta (x)r \quad \forall x\in H
$$
and
$$
(I\otimes \Delta)(r) = r_{13}+r_{12},\quad\quad (\Delta\otimes I)(r) = r_{13}+r_{23},$$
Note that the last two equations imply that $r$ belongs to $Prim(H)^{\ot 2}$. 
Thus the tangent space to the space of universal R-matrices at a point $1\in \Tr(H)$ is $\Tr_{inf}(H) = (Prim(H)^{\ot 2})^H$ the vector space of invariant classical R-matrices of $H$. 
The tangent Lie algebra of the stabiliser of the canonical quasi-triangular structure on $H$ (the one corresponding to $1\in H^{\ot 2}$) is
$$OutDer_{tw}(H,1) = \{(d,\phi)\in OutDer_{tw}(H)|\ \phi=\phi_{21}\}.$$ 
The $OutDer_{tw}(H,1)$-action on $(Prim(H)^{\ot 2})^H$ has the form
$$(d,\phi)(r) = (I\otimes d+d\otimes I)(r) - [\phi,r].$$
\ere

\section{Examples}

\subsection{Twisted derivations of universal enveloping algebras}\label{invtwi}
In this section we assume that the ground field $k$ is of
characteristic zero.  

Now let $(d,\phi)$ be a twisted derivation of $U(\g)$. In particular
$\phi\in U(\g)^{\otimes 2}$ is an infinitesimal (or co-Hochschild) 2-cocycle (see Appendix \ref{coho}), i.e.:
$$1\otimes \phi + (I\otimes\Delta)(\phi) = \phi\otimes 1 + (\Delta\otimes I)(\phi).$$ Proposition \ref{chu} implies that there is $a\in U(\g)$ such that
\begin{equation}\label{cob}
\phi = \overline{\phi} + a\otimes 1 + 1\otimes a - \Delta(a),\quad \overline{\phi} = Alt_2(\phi) = \frac{1}{2}(\phi - \phi_{21}).
\end{equation}
Thus the twisted derivation $(d,\phi)$ is gauge equivalent to a twisted derivation $(d',\overline\phi)$ (with $\overline\phi\in\Lambda^2\g$):
$$(d,\phi) = (d + [a,-] - [a,-],\overline\phi + a\otimes 1 + 1\otimes a - \Delta(a)) = (d',\overline\phi) - \partial(a).$$

Now look at the equation (\ref{conjd}) for the twisted derivation $(d',\overline\phi)$:
 $$(I\otimes d'+d'\otimes I)(\Delta(x)) - \Delta(d'(x)) = [\overline\phi,\Delta(x)],$$
The left hand side is a symmetric element of $U(\g)^{\ot 2}$ (due to cocommutativity of $U(\g)$), while the right hand side is anti-symmetric:
$$[\overline\phi,\Delta(x)]\in \Lambda^2\g$$ Thus they both must be zero:
$$(I\otimes d'+d'\otimes I)(\Delta(x)) - \Delta(d'(x)) = 0,$$
$$[\overline\phi,\Delta(x)] = 0,$$
i.e. $(d',\overline\phi)$ is a separated twisted derivation.

Thus we have the following.
\bth\lb{tdu}
Let $\g$ be a Lie algebra over a field of characteristic zero. 
Then the Lie algebra of outer twisted derivations of $U(\g)$ has a form:
$$OutDer_{tw}(U(\g))\simeq OutDer(\g)\ltimes (\Lambda^2\g)^\g,$$ where the crossed product is taken with respect to the natural action of $OutDer(\g)$ on the abelian Lie algebra $(\Lambda^2\g)^\g$.
\eth
\bpf
We have established that any twisted derivation of $U(\g)$ is gauge equivalent to $(d,\phi)$, where $d$ is a bialgebra derivations and $\phi\in(\Lambda^2\g)^\g$. A bialgebra derivation of $U(\g)$ is (induced by) a Lie algebra derivation of $\g$. Now all we need to show is that the gauge class of $(d,\phi)$ depends only on the class of $d$ in the Lie algebra of outer derivations $OutDer(\g)$. This can be seen by applying gauge equivalences corresponding to $x\in\g\subset U(\g)$:
$$(d,\phi) + \partial(x) = (d + [x,-],\phi).$$

The commutator $[X,Y]$ (in $U(\g)^{\ot 2}$) of two skew-symmetric elements is necessarily symmetric. Thus the commutator on $(\Lambda^2\g)^\g$ is trivial. It follows from (\ref{libr}) that the action of $OutDer(\g)$ on $(\Lambda^2\g)^\g$ is $$d(X) = (d\ot 1+1\ot d)(X).$$
\epf

In particular the theorem implies that the Lie algebra of invariant infinitesimal twists coincides with $(\Lambda^2\g)^\g.$ This of course can be seen directly. Indeed, if $\phi\in U(\g)^{\ot 2}$  is a $\g$-invariant infinitesimal twist then $\overline{\phi}$ is also $\g$-invariant which altogether makes $$\partial(a) = a\otimes 1 + 1\otimes a - \Delta(a) = \phi - \overline\phi$$ $\g$-invariant. According to lemma \ref{invco} the last implies that $a$ can be chosen to be $\g$-invariant (central).

\bre
Here we say a few words about the Jacobiator of the crossed module of Lie algebras $$Der_{tw}(U(\g)) \stackrel{\partial}{\leftarrow} U(\g)$$ 
as a cohomology class in $$H^3\big(OutDer_{tw}(U(\g)),Z(\g)\big) = H^3\big(OutDer(\g)\ltimes (\Lambda^2\g)^\g,Z(\g)\big).$$ 
We start by showing that its restriction to $H^3((\Lambda^2\g)^\g,Z(\g))$, which coincides with the Jacobiator of the crossed module of Lie algebras \be\lb{cml1}Z^2(U(\g))^\g \stackrel{\partial}{\leftarrow} C^1(U(\g))^\g\ee is trivial. Here $Z^2$ is the Lie algebra (with respect to the commutator in
$U(\g)^{\otimes 2}$) of 2-cocycles of the subcomplex of $\g$-invariants of (\ref{tangcoh}) and $C^1(U(\g))^\g = Z(U(\g))$ is the abelian Lie
algebra of 1-cochains of the same subcomplex. Note that the action of $Z^2(U(\g))^\g$ on $C^1(U(\g))^\g$ is trivial and the commutator
$[X,\partial(a)]$ is zero for any $X\in Z^2(U(\g))^\g$ and $a\in Z(U(\g))$ (thus in particular fulfilling the axioms of crossed module of Lie
algebras). 
Thus for any $X,Y\in Z^2(U(\g))^\g$ we have $[X,Y] = [Alt_2(X),Alt_2(Y)]$. As was noted in the proof of theorem \ref{tdu} the commutator $[Alt_2(X),Alt_2(Y)]$ is a symmetric 2-cocyle and hence a coboundary. So we can write it as $\partial(a(Alt_2(X),Alt_2(Y)))$ for some function $a:(\Lambda^2\g)^\g\ot(\Lambda^2\g)^\g\to Z(U(\g))$. The function $a$ can be extended  to a function $a:Z^2(U(\g))^\g\ot Z^2(U(\g))^\g\to Z(U(\g))$ by $a(X,Y) = a(Alt_2(X),Alt_2(Y))$. Now we
have $a(X,[Y,Z]) = 0$, so the Jacobiator of the crossed module of Lie algebras (\ref{cml1}) is trivial.
\newline
It follows from the spectral sequence of the extension
$$(\Lambda^2\g)^\g\to OutDer_{tw}(U(\g))\to OutDer(\g)$$ that the cohomology $E^3_\infty = H^3\big(OutDer_{tw}(U(\g)),Z(\g)\big)$ has a filtration 
$$E^3_\infty = F^0E^3_\infty \supset F^1E^3_\infty \supset F^2E^3_\infty \supset F^3E^3_\infty$$
with the first associated quotient $$F^0E^3_\infty/F^1E^3_\infty = E^{0,3}_\infty \subset E^{0,3}_2 = H^3((\Lambda^2\g)^\g,Z(\g)).$$ Since the induced map 
$$H^3\big(OutDer_{tw}(U(\g)),Z(\g)\big) = E^3_\infty\to E^{0,3}_2 = H^3((\Lambda^2\g)^\g,Z(\g))$$ is the restriction we have shown that the Jacobiator belongs to $F^1E^3_\infty$. The second quotient is
$$F^1E^3_\infty /F^2E^3_\infty = E^{1,2}_\infty \subset E^{1,2}_2 = H^1\big(OutDer(\g),H^2((\Lambda^2\g)^\g,Z(\g))\big).$$ The image of the Jacobiator under the homomorphism $$F^1E^3_\infty \to H^1\big(OutDer(\g),H^2((\Lambda^2\g)^\g,Z(\g))\big)$$
can be described as follows. Let $d\in Der(\g), X,Y\in (\Lambda^2\g)^\g$. Since
$$(d\ot I+I\ot d)([X,Y]) = (d\ot I+I\ot d)(\partial(a(X,Y))) = \partial(d(a(X,Y)))$$ coincides with
$$[(d\ot I+I\ot d)(X),Y] + [X,(d\ot I+I\ot d)(Y)] = \partial\big(a((d\ot I+I\ot d)(X),Y)+a(X,(d\ot I+I\ot d)(Y))\big)$$
the difference 
$$a(d,X,Y) = a\big((d\ot I+I\ot d)(X),Y\big)+a\big(X,(d\ot I+I\ot d)(Y)\big) - d\big(a(X,Y)\big)$$ belongs to $Z(\g)$. The assignment 
$$d\mapsto \big(X,Y\mapsto a(d,X,Y)\big)$$ defines a class in $H^1\big(OutDer(\g),H^2((\Lambda^2\g)^\g,Z(\g))\big)$, which is the class of Jacobiator. 
\newline
There is a choice of $a(X,Y)$ which makes $a(d,X,Y)$ identically zero. Indeed $a$ is the second component of the co-chain homotopy from remark \ref{chho}. The $\g$-invariance of the homotopy implies that $a(d,X,Y)$ is zero.

In general if the class of $a(d,X,Y)$ in $H^1\big(OutDer(\g),H^2((\Lambda^2\g)^\g,Z(\g))\big)$ is trivial, the Jacobiator belongs to $F^2E^3_\infty$. The obstruction for the Jacobiator to be in $F^3E^3_\infty$ is measured by its image in 
$$F^2E^3_\infty /F^3E^3_\infty = E^{2,1}_\infty \subset E^{2,1}_2 = H^2\big(OutDer(\g),H^1((\Lambda^2\g)^\g,Z(\g))\big).$$
Since $a(d,X,Y)$ is identically zero the class of the Jacobiator in $H^2\big(OutDer(\g),H^1((\Lambda^2\g)^\g,Z(\g))\big)$ is trivial.
Thus the Jacobiator belongs to $F^2E^3_\infty = H^3(OutDer(\g),Z(\g))$ and we have the following statement.
\bpr
The Jacobiator of the crossed module of Lie algebras $$Der_{tw}(U(\g)) \stackrel{\partial}{\leftarrow} U(\g)$$ 
as a cohomology class in $$H^3\big(OutDer_{tw}(U(\g)),Z(\g)\big) = H^3\big(OutDer(\g)\ltimes (\Lambda^2\g)^\g,Z(\g)\big)$$ coincides with the image of the canonical class of the crossed module of Lie algebras
$$\xymatrix{Z(\g)\ar[r] & \g\ar[r]^(.34)\partial & Der(\g)\ar[r] & OutDer(\g)}$$
in $H^3(OutDer(\g),Z(\g))$.
\epr

\ere

\subsection{Non-separated twisted derivations}

Here we present a construction providing examples of non-separated twisted derivations. Let $H$ be an algebra.
Denote by $T(H[t])$ the tensor algebra of the vector space $H[t]$ of polynomials with coefficients in $H$. 
Consider the quotient algebra
$$E(H) = T(H[t])/\langle xyt^n - \sum_{i=0}^n C^n_i xt^i\ot yt^{n-i},\ 1t^n,\ \forall x,y\in H, n\geq 0\rangle.$$
Note that 
$$H\to E(H),\quad x\mapsto xt^0$$ is an embedding of algebras.
\newline
\ble
The assignment 
$$d:H[t]\to E(H),\quad d(xt^n) = xt^{n+1}$$ extends to an algebra derivation.
\ele
\bpf
All we need to check is that $d$ preserves the defining relations of $E(H)$.  This is a straightforward consequence of the property of binomial coefficients $C^{n+1}_i = C^n_{i-1}+C^n_i$. Indeed
$$d(\sum_{i=0}^n C^n_i xt^i\ot yt^{n-i}) = \sum_{i=0}^n C^n_i (d(xt^i)\ot yt^{n-i}+xt^i\ot d(yt^{n-i})) = \sum_{i=0}^n C^n_i (xt^{i+1}\ot yt^{n-i}+xt^i\ot yt^{n-i+1})$$ coincides with
$$d(xyt^n) = xyt^{n+1} = \sum_{i=0}^{n+1} C^{n+1}_i xt^i\ot yt^{n-i+1}.$$
\epf
\bre
The assignment $H\mapsto E(H)$ is a functor from the category of algebras to the category of algebras with a derivation (differential algebras), which is a left adjoint to the forgetful functor. In other words $E(H)$ is the {\em free differential algebra} on an algebra $H$.
\ere
From now on we will suppress the tensor sign and will use $d^i(x)$ instead of $xt^i$ when working with elements of $E(H)$.

Let now $H$ be a bialgebra and let $\phi\in H^{\ot 2}$ be a co-Hochschild 2-cocycle:
$$1\otimes \phi + (I\otimes\Delta)(\phi) = \phi\otimes 1 + (\Delta\otimes I)(\phi).$$
\ble
The inductively defined assignment
\be\lb{cop}\Delta(d^n(x)) = (d\ot I+I\ot d)(\Delta(d^{n-1}(x))) - [\phi,\Delta(d^{n-1}(x))],\quad x\in H\ee extends to a homomorphism of algebras $\Delta:E(H)\to E(H)\ot E(H)$, making $E(H)$ a bialgebra with the counit defined by
$$\ve(d^n(x)) = 0,\qquad x\in H,\ n>0.$$
\ele
\bpf
We will use induction to prove that $\Delta$ preserves the defining relations of $E(H)$: 
$$\Delta\big(d^n(xy)-\sum_{i=0}^n C^n_i d^i(x)d^{n-i}(y)\big) = \Delta(d^n(xy)) - \sum_{i=0}^n C^n_i \Delta(d^i(x))\Delta(d^{n-i}(y)) = $$
$$(d\ot I+I\ot d)(\Delta(d^{n-1}(xy))) - [\phi,\Delta(d^{n-1}(xy))] - $$
$$ - \sum_{i=0}^n C^n_i\big((d\ot I+I\ot d)(\Delta(d^{i-1}(x))) - [\phi,\Delta(d^{i-1}(x))]\big)\Delta(d^{n-i}(y))$$ $$- \sum_{i=0}^n C^n_i \Delta(d^i(x))\big((d\ot I+I\ot d)(\Delta(d^{n-i-1}(y))) - [\phi,\Delta(d^{n-i-1}(y))]\big) = $$
$$(d\ot I+I\ot d)\big(\Delta\big(d^{n-1}(xy) - \sum_{i=0}^{n-1} C^n_i d^i(x)d^{n-i-1}(y)\big)\big) - $$ $$- \big[\phi,\Delta\big(d^{n-1}(xy) - \sum_{i=0}^{n-1} C^n_i d^i(x)d^{n-i-1}(y)\big)\big] = 0.$$
The coasociativity of $\Delta$ can also be proved by induction:
$$(\Delta\ot I-I\ot\Delta)\Delta(d^n(x)) = (\Delta\ot I-I\ot\Delta)\big((d\ot I+I\ot d)(\Delta(d^{n-1}(x)))\big) - (\Delta\ot I-I\ot\Delta)([\phi,\Delta(d^{n-1}(x))])
=$$ $$(\Delta d\ot I+\Delta\ot d - d\ot\Delta-I\ot\Delta d)\Delta(d^{n-1}(x)) -$$ $$- [(\Delta\ot I)(\phi),(\Delta\ot I)(\Delta(d^{n-1}(x)))] + [(I\ot\Delta)(\phi),(I\ot\Delta)(\Delta(d^{n-1}(x)))] =$$
$$(d\ot I\ot I+I\ot d\ot I+I\ot I\ot d)(\Delta\ot I-I\ot\Delta)\Delta(d^{n-1}(x)) - $$ $$- [\phi\ot 1+(\Delta\ot I)(\phi),(\Delta\ot I)(\Delta(d^{n-1}(x)))] + [1\ot\phi+(I\ot\Delta)(\phi),(I\ot\Delta)(\Delta(d^{n-1}(x)))] = 0.$$
\epf
Note that the embedding $H\to E(H)$ is a homomorphism of bialgebras. 

\bth
The pair $(d,\phi)$ is a twisted derivation of the bialgebra $E(H)$. 
\eth
\bpf
Clearly, the conditions (\ref{cocd},\ref{normd}) are obvious. The condition (\ref{conjd}) follows from the definition of the coproduct (\ref{cop}).
\epf

\bex
Let $\ga = \langle x,y\rangle$ be an abelian 2-dimensional Lie algebra. Let $H = U(\ga) = k[x,y]$ be its universal enveloping algebra. The algebra $E(H)$ is the quotient of the free associative algebra $k\langle x,y,d(x),d(y),d^2(x),d^2(y),...\rangle$ by the ideal generated by 
$\sum_{i=0}^n C^n_i [d^i(x),d^{n-i}(y)]$ for all $n$.
Let $\phi = x\ot y\in H^{\ot 2}$ be an infinitesimal twist. The corresponding coproduct on $E(H)$ is defined by
$$\begin{array}{l} \Delta(x) = x\ot 1+1\ot x, \qquad\qquad\qquad \Delta(y) = y\ot 1+1\ot y,\\
\Delta(d(x)) = d(x)\ot 1+1\ot d(x), \qquad \Delta(d(y)) = d(y)\ot 1+1\ot d(y),\\
\Delta(d^2(x)) = d^2(x)\ot 1+1\ot d^2(x)-[x\ot y,d(x)\ot 1+1\ot d(x)], \\ \Delta(d^2(y)) = d^2(y)\ot 1+1\ot d^2(y)-[x\ot y,d(y)\ot 1+1\ot d(y)],\\
\end{array}$$
\eex

\bre
The above constructions is the infinitesimal analogue of the following. 
Let $H$ be an algebra.
Denote by $T(H[t^{\pm 1}])$ the tensor algebra of the vector space $H[t^{\pm 1}]$ of Laurent polynomials with coefficients in $H$. 
Consider the quotient algebra
$$A(H) = T(H[t])/\langle xyt^n - xt^n\ot yt^n,\ 1t^n-1,\ \forall x,y\in H, n\in\Z\rangle.$$
Note that 
$$H\to A(H),\quad x\mapsto xt^0$$ is an embedding of algebras.
\newline
\ble
The assignment 
$$f:H[t]\to A(H),\quad f(xt^n) = xt^{n+1}$$ extends to an algebra automorphism.
\ele
\bpf
All we need to check is that $f$ preserves the defining relations of $A(H)$.  This is straightforward $$f(xyt^n - xt^n\ot yt^n) = xyt^{n+1} - xt^{n+1}\ot yt^{n+1} = 0,\quad f(t^n-1) = t^{n+1}-1 = 0.$$ 
\epf
The assignment $H\mapsto A(H)$ is a functor from the category of algebras to the category of algebras with an automorphism, which is a left adjoint to the forgetful functor. In other words $A(H)$ is the {\em free algebra with an automorphism} on an algebra $H$.

From now on we will suppress the tensor sign and will use $f^i(x)$ instead of $xt^i$ when working with elements of $A(H)$.

Let now $H$ be a bialgebra and let $F\in H^{\ot 2}$ be a twist:
$$(1\otimes F)(I\otimes\Delta)(F) = (F\otimes 1)(\Delta\otimes I)(F).$$
\ble
The inductively defined assignment
\be\lb{copa}\Delta(f^n(x)) = F^{-1}((f\ot f)(\Delta(f^{n-1}(x))))F,\quad x\in H\ee extends to a homomorphism of algebras $\Delta:A(H)\to A(H)\ot A(H)$, making $A(H)$ a bialgebra with the counit defined by
$$\ve(f^n(x)) = \ve(x),\quad x\in H, n\in\Z.$$
\ele
\bpf
We will use induction to prove that $\Delta$ preserves the defining relations of $A(H)$. 
$$\Delta(f^n(xy)-f^n(x)f^n(y)) = \Delta(f^n(xy))-\Delta(f^n(x))\Delta(f^n(y)))  = $$
$$F^{-1}((f\ot f)(\Delta(f^{n-1}(xy))))F - F^{-1}((f\ot f)(\Delta(f^{n-1}(x))))FF^{-1}((f\ot f)(\Delta(f^{n-1}(y))))F = $$
$$F^{-1}((f\ot f)(\Delta(f^{n-1}(xy)-f^{n-1}(x)f^{n-1}(y))))F = 0.$$

The coasociativity of $\Delta$ can also be proved by induction:
$$(\Delta\ot I-I\ot\Delta)\Delta(f^n(x)) = (\Delta\ot I-I\ot\Delta)(F^{-1}((f\ot f)(\Delta(f^{n-1}(x))))F) =$$ 
$$(\Delta\ot I)(F)^{-1}(\Delta f\ot f)(\Delta(f^{n-1}(x)))(\Delta\ot I)(F) - (I\ot\Delta)(F)^{-1}(f\ot\Delta f)(\Delta(f^{n-1}(x)))(I\ot\Delta)(F) = $$
$$(\Delta\ot I)(F)^{-1}(F\ot 1)^{-1}(f\ot f)((\Delta\ot I)\Delta(f^{n-1}(x)))(F\ot 1)(\Delta\ot I)(F) - $$ $$- (I\ot\Delta)(F)^{-1}(1\ot\Delta)^{-1}(f\ot f)((I\ot\Delta)\Delta(f^{n-1}(x)))(1\ot\Delta)(I\ot\Delta)(F) = $$
$$(\Delta\ot I)(F)^{-1}(F\ot 1)^{-1}(f\ot f)((\Delta\ot I)\Delta(f^{n-1}(x)) - (I\ot\Delta)\Delta(f^{n-1}(x)))(F\ot 1)(\Delta\ot I)(F) = 0.$$
\epf
Note that the embedding $H\to A(H)$ is a homomorphism of bialgebras. 

\bth
The pair $(f,F)$ is a twisted automorphism of the bialgebra $A(H)$. 
\eth
\bpf
Clearly, the conditions (\ref{coc},\ref{norm}) are obvious. The condition (\ref{conj}) follows from the definition of the coproduct (\ref{copa}).
\epf

\ere

\subsection{Sweedler's Hopf algebra}

Here we describe twisted derivations of a non-commutative and non-cocommutative 4-dimensional Hopf algebra defined by Sweedler.
Recall that the Sweedler's Hopf algebra $H_4$ is the algebra
$$H_4 = k\langle g,x|\ g^2=1,\ x^2=0,\ gx+xg=0\rangle.$$
It is a Hopf algebra with the comultiplication
$$\Delta(g) = g\ot g,\quad \Delta(x) = 1\ot x + x\ot g,$$
the counit
$$\ve(g) = 1,\quad \ve(x) = 0,$$
and the antipode
$$S(g) = g^{-1},\quad S(x) = -x.$$

\ble\label{csw}
The second cohomology $H^2(H_4)$ of the Sweedler's Hopf algebra is one-dimensional and is generated by the element $\psi = x\ot gx\in H_4^{\ot 2}$.
\ele
\bpf
Lemma \ref{gpr} implies that $\psi$ is a 2-cocycle.

\epf

Let now $(d,\phi)$ be a twisted derivation of $H_4$. According to lemma (\ref{csw}) $\phi$ must be a multiple of $\psi = x\ot gx$.
Note that the element $\psi$ is $\Delta(H_4)$-invariant:
$$[\psi,\Delta(g)] = [x\ot gx,g\ot g] = xg\ot gxg - gx\ot x = 0,$$
$$[\psi,\Delta(x)] = [x\ot gx,1\ot x + x\ot g] = 0.$$
Thus up to gauge equivalences twisted derivations are separated:
$$OutDer_{tw}(H_4) = OutDer_{bialg}(H_4)\ltimes OutDer^0_{tw}(H_4).$$
Now all what remains to analise to have a full control of twisted derivations is the Lie algebra of outer bialgebra derivations. Since the space $Prim(H_4)$ of primitive elements is zero, $H_4$ does not have inner bialgebra derivations:
$$OutDer_{bialg}(H_4) = Der_{bialg}(H_4).$$
For a bialgebra derivation $d:H_4\to H_4$ the element $g^{-1}d(g)$ has to be primitive. Indeed,
$$\Delta(d(g)) = d(g)\ot g + g\ot d(g)$$ and $$\Delta(gd(g)) = g^{-1}d(g)\ot 1 + 1\ot g^{-1}d(g).$$
Thus for a bialgebra derivation $d:H_4\to H_4$ we have $d(g)=0$. 
The value $d(x)$ is $g$-primitive:
$$\Delta(d(x)) = 1\ot d(x) + d(x)\ot g + x\ot d(g) = 1\ot d(x) + d(x)\ot g.$$
Hence any bialgebra derivation of $H_4$ is proportional to 
$$d(g) = 0,\quad d(x) = x.$$

\bth
The algebra $OutDer_{tw}(H_4)$ of twisted derivations of the Sweedler's Hopf algebra is a two-dimensional non-abelian Lie algebra.
\eth
\bpf
We have shown that $OutDer_{tw}(H_4)$ is a 2-dimensional vector space with generators $(d,0), (0,\psi)$. The non-triviality of the bracket can be seen directly
$$[(d,0),(0,\psi)] = (0,(d\ot I+I\ot d)(\psi))$$
with $$(d\ot I+I\ot d)(\psi) = (d\ot I+I\ot d)(x\ot gx) = 2x\ot gx = 2\psi.$$
\epf

\bre 
Integrating the bialgebra derivation $d:H_4\to H_4$ we get a one parameter family of bialgebra automorphisms:
$$f_{c}:H_4\to H_4,\quad f_{c}(g) = g,\quad f_c(x) = cx,\quad c\in k^*$$
$f_c\circ f_{c'} = f_{cc'}$

Similarly exponentiating the 2-cocycle $\psi$ gives a one parameter family of invariant twists
$$\Phi_a = 1\ot 1 + ax\ot gx,\quad a\in k.$$

Note that 
$$f_c\circ f_{c'} = f_{cc'},\quad \Phi_a\Phi_{a'} = \Phi_{a+a'},\quad (f_c\ot f_c)(\Phi_a) = \Phi_{c^2a}$$
So that the group $Out_{tw}(H_4)$ of twisted automorphisms contains a subgroup isomorphic to $k^*\ltimes k$ (with the $k^*$-action on $k$ given by $c*a = c^2a$).
\ere

\section{Appendix. Co-Hochschild cohomology of bialgebras.}\lb{coho}

Following Drinfeld \cite{dr} consider the
complex $C^*(H) = (H^{\otimes *},\partial)$ with the differential $\partial:H^{\otimes n}\to H^{\otimes n+1}$ defined by
\begin{equation}\label{tangcoh}
\partial(X) = 1\otimes X +\sum_{i=1}^n(-1)^i(I^{\otimes i-1}\otimes\Delta\otimes I^{\otimes n-i-1})(X) + (-1)^{n+1}(X\otimes 1).
\end{equation}
We call this complex the {\em co-Hochschild complex} of $H$ and its cohomology {\em co-Hochschild cohomology} of $H$. The motivation for the choice of name is the following. 
There is a canonical map $C^*(H)\to CH^*(H^\vee,k)$ into the standard Hochschild complex of the dual algebra $H^\vee$ with coefficients in the trivial module $k$. This map is an isomorphism if $H$ is finite-dimensional.
\bex
Let $H = k[G]$ be the group algebra of a finite group $G$. The dual $k[G]^\vee = k(G)$ is a semi-simple algebra. Then the isomorphism of complexes $C^*(k[G])\to CH^*(k(G),k)$ implies that the co-Hochschild cohomology $H^*(k[G])$ is trivial ($k$ in degree 0 and zero otherwise). The last is true without assuming that $G$ is finite (and can bee proved by establishing a contracting homotopy for the complex $C^*(k[G])$). 
\eex 

Note that we only use coalgebra structure in the definition of the co-Hochschild complex. Thus it is defined for a coalgebra. 

The co-Hochschild complex $C^*(H)$ is a differential-graded algebra with respect to the {\em cup product}:
$$\cup:C^m(H)\otimes C^n(H)\to C^{m+n}(H),\quad X\cup Y = X\ot Y.$$
Indeed, the property $\partial(X\ot Y) = \partial(X)\ot Y + (-1)^{m}X\ot\partial(Y)$ can be checked directly. 
The cup product induces a product on co-Hochschild cohomology.
\bpr
The cup product on co-Hochschild cohomology $H^*(H)$ is graded commutative:
$$[X]\cup[Y] = (-1)^{mn}[Y]\cup[X],\quad [X]\in H^m(H),\ Y\in H^n(H).$$
\epr
\bpf
The construction is similar to the one for Hochschild complex from \cite{g0}. 
For $X\in H^{\ot m}$, $Y\in H^{\ot n}$ and $i=1,...,m$ define
$$X\circ_iY = (I^{\ot i-1}\ot\Delta^{(n-1)}\ot I^{\ot m-i})(X)(1_{H^{\ot i-1}}\ot Y\ot  1_{H^{\ot m-i}}),$$
where $\Delta^{(n-1)}:H\to H^{\ot n}$ is the iterated coproduct. The operations $\circ_i$ satisfy to the axioms of a pre-Lie system of \cite{g0}:
$$(X\circ_iY)\circ_jZ = \left\{\begin{array}{ll} (X\circ_jZ)\circ_{i+p} Y,& j<i\\ X\circ_i(Y\circ_jZ),& i\leq j\end{array}\right.$$
for $X\in H^{\ot m}$, $Y\in H^{\ot n}$ and $Z\in H^{\ot p}$.
Thus, according to \cite{g0}, the operation
$$X\circ Y = \sum_{i=1}^m(-1)^{ni}X\circ_iY$$ is a homotopy for commutativity of the cup product:
$$Y\cup X - (-1)^{nm}X\cup Y = \partial(X)\circ Y + (-1)^{n-1}X\circ\partial(Y) - (-1)^{n-1}\partial(X\circ Y).$$
\epf

\bre\lb{ger}
The bracket 
$$[[X,Y]] = X\circ Y - (-1)^{mn}Y\circ X$$ endows $H^*(H)$ with a graded Lie bracket
$$[[\ ,\ ]]:H^m(H)\ot H^n(H)\to H^{m+n-1}(H),$$ 
which together with the cup product turn $H^*(H)$ into a Gerstenhaber algebra.
\ere

The following gives a description of co-Hochschild cohomology of a universal enveloping algebra in characteristic zero.
\begin{prop}\label{chu}
For a universal enveloping algebra $H=U(\g)$ the alternation map $Alt_n:H^{\otimes n}\to \Lambda^n H$ induces an
isomorphism of the n-th co-Hochschild cohomology  and $\Lambda^n\g$.
\end{prop}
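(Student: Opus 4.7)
Since the co-Hochschild complex $C^*(H)$ depends only on the coalgebra structure of $H$, I would first apply the PBW theorem: in characteristic zero, the symmetrization map $\sigma:S(\g)\to U(\g)$ is a coalgebra isomorphism (where $S(\g)$ carries the cocommutative coproduct with primitive generators $\g$), and it restricts to the identity on $\g$. Thus $\sigma$ induces an isomorphism of complexes $C^*(U(\g))\cong C^*(S(\g))$ compatible with the alternation maps, and it suffices to prove the statement for $S(\g)$.

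For $S(\g)$, the plan is to exploit the weight grading $S(\g)=\bigoplus_{m\geq 0}S^m\g$, which induces a grading on each tensor power $S(\g)^{\otimes n}$ that is preserved by the co-Hochschild differential. The complex decomposes as $C^*(S(\g))=\bigoplus_m C^*(S(\g))_m$, and I would show the weight-$m$ subcomplex has cohomology concentrated in degree $n=m$ and equal to $\Lambda^m\g$. The key observation is that this subcomplex coincides (up to reindexing) with the normalized Koszul complex of the augmented polynomial algebra $S(\g)$; equivalently, dualizing gives the Koszul resolution of $k$ over $S(\g^*)$, whose $\mathrm{Tor}$ groups are precisely $\Lambda^*\g$. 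Alternatively, one can build an explicit contracting homotopy on the reduced bar-type complex, using that $S(\g)$ is the free commutative algebra on $\g$. Either way, $H^n(C^*(S(\g)))=\Lambda^n\g$, with every class represented by an element of $\g^{\otimes n}\subset S(\g)^{\otimes n}$ (the only contributing weight is $m=n$).

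Two facts then identify the isomorphism with $Alt_n$. First, $Alt_n$ kills coboundaries: expanding $\partial X=1\otimes X+\sum_i(-1)^i(I^{\otimes i-1}\otimes\Delta\otimes I^{\otimes n-i})(X)+(-1)^{n+1}X\otimes 1$, cocommutativity of $H$ forces $\sum x_i^{(1)}\wedge x_i^{(2)}=0$, which annihilates every middle term after alternation, while the two boundary terms cancel because moving $1$ across $n$ slots contributes $(-1)^n$. Second, on a cocycle $X=x_1\otimes\cdots\otimes x_n\in\g^{\otimes n}$ the alternation is literally $x_1\wedge\cdots\wedge x_n\in\Lambda^n\g$, so combined with the previous step this yields the asserted isomorphism $H^n(C^*(U(\g)))\cong\Lambda^n\g$. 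The main obstacle is the acyclicity computation for the weight-$m$ subcomplex in degrees $n<m$: this is essentially the classical Hochschild-Kostant-Rosenberg theorem for a polynomial algebra, but carrying it out cleanly in the co-Hochschild setting requires carefully setting up either the Koszul identification or the explicit contracting homotopy.
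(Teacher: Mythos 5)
Your proposal is correct and shares its skeleton with the paper's (sketched) proof: both reduce to $S(\g)$ via the PBW coalgebra isomorphism and then decompose the co-Hochschild complex by the weight grading, so that everything comes down to the cohomology of the weight-graded pieces. Where you diverge is in that last computation: the paper identifies the weight-$n$ piece with the cochain complex of the simplicial $n$-cube tensored over the symmetric group with $\g^{\ot n}$ (whose cohomology is concentrated in top degree and carries the sign representation, yielding $\Lambda^n\g$), whereas you dualize to the Hochschild complex of a polynomial algebra with trivial coefficients and quote the Koszul resolution (equivalently HKR), or alternatively build a contracting homotopy from freeness of $S(\g)$. The two computations are equivalent; yours has the advantage of invoking a standard homological fact rather than a bespoke combinatorial identification, at the small cost of the dualization step, which for infinite-dimensional $\g$ should be justified by the weight grading or by writing $\g$ as a filtered colimit of finite-dimensional subspaces (the complex and $\Lambda^n$ both commute with filtered colimits). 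You also supply something the paper leaves implicit: the verification that the isomorphism is actually induced by $Alt_n$, using that cocommutativity of $U(\g)$ forces $Alt_{n+1}\circ\partial=0$ (the inner terms die because $\Delta(x)$ is symmetric in two adjacent slots, and the two outer terms cancel against each other) and that $Alt_n$ restricts to the identity on the representing cocycles in $\g^{\ot n}$. That check is correct and is a worthwhile addition to the argument.
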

\begin{proof}
(Sketch of, for details see \cite{dr}):
\newline
By Poincare-Birkhoff-Witt theorem, the map
$$S^*(\g)\to U(\g),\quad\quad x_1\ot...\ot x_n\mapsto Sym_n(x_1...x_n),\quad x_i\in\g$$
 is an isomorphism of coalgebras. 
 The complex (\ref{tangcoh}) for $H = S^*(\g)$ breaks into (tensor products of) pieces:
$$S^n(\g)\to \bigoplus_{i_1+i_2=n}S^{i_1}(\g)\otimes S^{i_2}(\g)\to ...\to
\bigoplus_{i_1+...+i_s=n}\otimes_{j=1}^sS^{i_j}(\g)\to ...\to \g^{\otimes n}
$$
The  $n$-th piece is isomorphic to the cochain complex of the simplicial $n$-cube tensored (over the symmetric group $S_n$)
with $\g^{\otimes n}$.
\end{proof}

\bre\lb{chho}
The map $Alt_*:H^{\otimes *}\to H^{\otimes *}$ from proposition \ref{chu} is  a map of co-Hochschild complex(es). 
Moreover $Alt_*$ is homotopic to the identity, i.e. there is a collection of maps $a_n: H^{\otimes n}\to H^{\otimes n-1}$ such that
$$I-Alt_n = \partial\circ a_n + a_{n+1}\circ\partial.$$
\ere

\bre
Under the isomorphism $H^*(U(\g))\to\Lambda^*(\g)$ the Gerstenhaber bracket on $H^*(U(\g))$ corresponds to the {\em Schouten} bracket on $\Lambda^*(\g)$:
$$[[x_1\wedge...\wedge x_m,y_1\wedge...\wedge y_n]] = \sum_{i,j}(-1)^{i+j}[x_i,y_j]\wedge x_1\wedge...\wedge \hat x_i\wedge...\wedge x_m\wedge y_1\wedge...\wedge \hat y_j\wedge...\wedge y_n.$$ Here $\hat x$ means that $x$ is omitted in the exterior product.
\ere

\bre
Note that the first cohomology $H^1(H)$ of a bialgebra coincides with the space $Prim(H)$ of its primitive elements, which forms a Lie algebra with respect to the commutator. The embedding $Prim(H)\subset H$ induces a homomorphism of bialgebras $U(Prim(H))\to H$, which in its turn gives rise to a map
$$\Lambda^*(Prim(H))\to H^*(H).$$
\ere

The following describes the cohomology of the the subcomplex of $H$-invariant elements of (\ref{tangcoh})
in the case of universal enveloping algebra $H$.
\begin{lem}\lb{invco}
For a universal enveloping algebra $H=U(\g)$ the alternation map $Alt_n:(H^{\otimes n})^H\to (\Lambda^n H)^H$ induces
an isomorphism of the n-th cohomology of the subcomplex of $H$-invariant elements of (\ref{tangcoh}) and the space of
$\g$-invariant skew-symmetric tensors $(\Lambda^n\g)^\g$.
\end{lem}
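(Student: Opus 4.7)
The plan is to lift the quasi-isomorphism $Alt_*$ of Proposition \ref{chu} to the subcomplex of $H$-invariants. First I would record the equivariance ingredients, all of which follow from cocommutativity of $H = U(\g)$. The iterated adjoint action of $H$ on $H^{\otimes n}$ is invariant under permutations of the tensor factors, so $Alt_n$ is $H$-equivariant and restricts to a map $(H^{\otimes n})^H \to (\Lambda^n H)^H$. Cocommutativity also makes $\Delta$ a homomorphism of $H$-modules for the adjoint action, so the co-Hochschild differential $\partial$ from (\ref{tangcoh}) commutes with the $H$-action, and the subcomplex $((H^{\otimes *})^H,\partial)$ is well defined.

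The main step, and where I expect the real work to sit, is to show that the chain homotopy $a_n$ of Remark \ref{chho} satisfying $I - Alt_n = \partial\circ a_n + a_{n+1}\circ\partial$ can be chosen $H$-equivariant. This is where the explicit construction used in the proof of Proposition \ref{chu} matters: over a field of characteristic zero the PBW symmetrization $S^*(\g)\to U(\g)$ is a $\g$-equivariant isomorphism of coalgebras, so the decomposition of $C^*(U(\g))$ into simplicial-cube cochain complexes tensored with $\g^{\otimes n}$ is $\g$-equivariant, and the contracting homotopy it produces inherits this equivariance. Once this is in hand the identity $I - Alt_n = \partial\circ a_n + a_{n+1}\circ\partial$ restricts to the invariant subcomplex, so $Alt_*$ is a quasi-isomorphism there as well.

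Finally I would identify what this quasi-isomorphism computes. The image $Alt_*((H^{\otimes *})^H)$ lies in $(\Lambda^* H)^H$, and by the same PBW analysis used in Proposition \ref{chu} the cocycle classes represented there all lie in $\Lambda^*\g$; since $\g$ consists of primitive elements, $\partial$ vanishes identically on $\Lambda^n\g \subset H^{\otimes n}$, so every element of $(\Lambda^n\g)^\g$ is automatically an invariant cocycle. Injectivity of $Alt_n$ on cohomology is inherited from Proposition \ref{chu}: if an element of $(\Lambda^n\g)^\g$ equals $\partial(\beta)$ for some invariant $\beta$, then it is in particular a coboundary in the full complex, hence zero. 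Assembling these pieces gives the desired isomorphism $H^n((H^{\otimes *})^H,\partial)\cong(\Lambda^n\g)^\g$ induced by $Alt_n$.
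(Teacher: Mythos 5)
Your proof is correct and follows essentially the same route as the paper: both arguments rest on the $\g$-equivariance of the PBW coalgebra isomorphism and the naturality in $\g$ of the cube-complex decomposition from Proposition \ref{chu}, which is exactly what makes the homotopy of Remark \ref{chho} restrict to the invariant subcomplex. You simply spell out the equivariant-homotopy step that the paper leaves implicit.
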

\begin{proof}
The coalgebra isomorphism between $U(\g)$ and $S^*(\g)$ is $\g$-invariant. The isomorphism between the degree $n$ component of 
(\ref{tangcoh}) and the cochain complex of the simplicial $n$-cube tensored with $\g^{\otimes n}$ is natural in $\g$
and in particular $\g$-invariant.
\end{proof}

\bre
Note that the Schouten bracket on the $(\Lambda^n\g)^\g$ is zero. 
\ere

Now let $K\subset H$ be an embedding of Hopf algebras. 
Clearly $C^*(K)\subset C^*(H)$ is a subcomplex.
More generally define
$$C^i(H)_m = \left\{\begin{array}{ll}K^{\ot i}, & i\leq m\\ K^{\ot m}\ot H^{\ot i-m},& i>m\quad . \end{array}\right. $$
Then 
$$C^*(H) = C^*(H)_0\supset C^*(H)_1\supset ... $$ is a decreasing filtration by subcomplexes
with 
$\cap_{m=0}^\infty C^*(H)_m = C^*(K)$.

Let $g$ be a group-like element of a Hopf algebra $H$. An element $x\in H$ is called {\em $g$-primitive} if
$$\Delta(x) = 1\ot x + x\ot g.$$ Denote by $Prim_g(H)$ the space of $g$-primitive elements of $H$.
\ble\lb{gpr}
Let $g_i, i=1,...,m$ be a collection of group-like elements in $H$ such that $g_1...g_m=1$. Then for any $x_i\in Prim_{g_i}(H)$ 
$$F = x_1\ot g_1x_2\ot g_1g_2x_3\ot...\ot g_1...g_{m-1}x_m\in H^{\ot m}$$ is a co-Hochschild cocycle.
\ele
\bpf
Lets first drop the assumption $g_1...g_m=1$. We can prove by induction that $\partial(F) = F\ot(g_1...g_m-1)$. Indeed, 
$$\partial(F) = 1\ot x_1\ot g_1x_2\ot g_1g_2x_3\ot...\ot g_1...g_{m-1}x_m + $$
$$\sum_{i=1}^n(-1)^i x_1\ot g_1x_2\ot...\ot(g_1...g_{i-1}\ot g_1...g_{i-1}x_i + g_1...g_{i-1}x_i\ot g_1\ot g_1...g_i)\ot...\ot g_1...g_{m-1}x_m + $$ $$+ (-1)^{n+1}( x_1\ot g_1x_2\ot...\ot g_1...g_{m-1}x_m\otimes 1) = F\ot(g_1...g_m-1).$$ Now the lemma follows.
\epf


\begin{thebibliography}{99}

\bibitem{bs}
R. Brown, C.B. Spencer, $G$-groupoids, crossed modules and the fundamental groupoid of a topological group. {\sl Nederl.
Akad. Wetensch. Proc., Ser. A,} {\bf 38}  (1976), no. 4, 296--302.
%
\bibitem{da}
A. Davydov, Twisted automorphisms of Hopf algebras, in Noncommutative structures 
in Mathematics and Physics, {\sl Koninklijke Vlaamse Academie Van Belgie Voor 
Wetenschappen en Kunsten}, 2010, 103-130, 
arXiv:0708.2757 .
%
\bibitem{dr}
V. G. Drinfel'd, Quasi-Hopf algebras. (Russian) {\sl Algebra i Analiz} {\bf 1} (1989), no. 6, 114--148; translation in {\sl Leningrad
Math. J.} {\bf 1} (1990), no. 6, 1419--1457.
%
\bibitem{dr1}
V. G. Drinfel'd, Almost cocommutative Hopf algebras. (Russian) {\sl Algebra i Analiz} {\bf 1} (1989), no. 2, 30--46; translation in
{\sl Leningrad Math. J.} {\bf 1} (1990), no. 2, 321--342
%
\bibitem{g0}
M. Gerstenhaber, The Cohomology Structure of an Associative Ring, 
{\sl Ann. of Math.}, {\bf 78}, No. 2. (1963), pp. 267-288. 
%
\bibitem{g}
M. Gerstenhaber, On the deformation of rings and algebras: II, {\sl Ann. of 
Math}. {\bf 84} (1966) 1Ð19 
%
\bibitem{h1}
G. Hochschild, Lie algebra kernels and cohomology, {\sl Amer. J. Math}. {\bf 76} 
(1954) 698Ð716 
%
\bibitem{h2}
G. Hochschild, Cohomology classes of finite type and finite dimensional 
kernels for Lie algebras, {\sl Amer. J. Math}. {\bf 76} (1954) 763Ð778 
%
\bibitem{wh}
J. H. C. Whitehead, Combinatorial homotopy. I,II. {\sl Bull. Amer. Math. Soc.} {\bf 55},  (1949), 213--245, 453--496.
%

\end{thebibliography}
\end{document}